\newtheorem{theorem}{Theorem}[section]
\newtheorem{proposition}[theorem]{Proposition}
\newtheorem{remark}[theorem]{Remark}
\def\thetheorem{\thesection.\arabic{theorem}}
\def\thesection{\arabic{section}}
\def\theequation {\thesection.\arabic{equation}}
\def\beq{\begin{equation}\displaystyle}
\def\eeq{\end{equation}}
\def\bel{\begin{equation} \displaystyle \begin{array}{l} }
\def\eel{\end{array} \end{equation} }
\def\bell{\begin{equation} \displaystyle \begin{array}{ll}  }
\def\eell{\end{array} \end{equation} }
\def\bea{\begin{eqnarray}}
\def\eea{\end{eqnarray} }
\def\bean{\begin{eqnarray*}}
\def\eean{\end{eqnarray*} }
\newenvironment{proof}{\noindent{\bf Proof.~}}
{{\mbox{}\hfill {\small \fbox{}}\\}}
\renewcommand\appendix{\bigskip {\noindent \Large \bf Appendix}
  \setcounter{section}{0}%
  \setcounter{subsection}{0}%
\setcounter{equation}{0}%
\setcounter{theorem}{0}%
\def\thetheorem{A.\arabic{theorem}}
\def\theequation {A.\arabic{equation}}}
\def\RR{\mathbb{R}}
\def\ds{\displaystyle}
\def\bs{\bigskip}
\def\bar#1{{\overline #1}}
\def\pa{\partial}
\begin{document}

\title{The sterile insect technique used as a barrier control against reinfestation}

\author{Luis Almeida
\footnote{Sorbonne Universit\'{e}, CNRS, Universit\'e de Paris, Inria, Laboratoire J.-L. Lions, F-75005 Paris, France ({\tt luis@ann.jussieu.fr}).} \and
Jorge Estrada \footnote{Laboratoire Analyse, G\'eom\'etrie et Applications CNRS UMR 7539, Universit\'e Sorbonne Paris Nord, Villetaneuse, France ({\tt estrada@math.univ-paris13.fr}).}
\and Nicolas Vauchelet\footnote{Laboratoire Analyse, G\'eom\'etrie et Applications CNRS UMR 7539, Universit\'e Sorbonne Paris Nord, Villetaneuse, France ({\tt vauchelet@math.univ-paris13.fr}).}
}

\maketitle

\begin{abstract}
  The sterile insect technique consists in massive release of sterilized males in the aim to reduce the size of mosquitoes population or even eradicate it.
  In this work, we investigate the feasability of using the sterile insect technique as a barrier against reinvasion. More precisely, we provide some numerical simulations and mathematical results showing that performing the sterile insect technique on a band large enough may stop reinvasion.
\end{abstract}

\bs

{\bf Keywords: } Sterile insect technique, wave-blocking, reaction-diffusion equations.


\bs

\section{Introduction}

Due to the number of diseases that they transmit, mosquitoes are considered as one of the most dangerous animal species for humans.
According to the World Health Organization \cite{WHO}, vector-borne diseases account for more than $17\%$ of all infectious diseases, causing more than 700 000 deaths annually. More than $3.9$ billion people in over $128$ countries are at risk of contracting dengue, with $96$ million cases estimated per year.
Malaria causes more than $400\,000$ deaths annually.

Since there is no vaccine against these diseases yet, the best strategy to control them is to act directly on the mosquito population.
Several strategies are developed and experimented to achieve this goal.
Some techniques aim at replacing the existing population of mosquitoes by a population unable to propagate the pathogens. This has triggered a growing interest in the use of the bacteria Wolbachia \cite{Hoffmann}.
Other techniques aim at reducing the size of the mosquito population like the sterile insect technique \cite{Dyck, Alphey}, the release of insects carrying a dominant lethal (RIDL) \cite{Thomas, Heinrich, Fu} and the driving of anti-pathogen genes into natural populations \cite{Gould, Marshall, Ward}.
Finally, other approaches combine both reduce and replace strategies \cite{Robert2012}.

In this article, we focus on the sterile insect technique. This strategy was introduced in the 50's by Raymond C. Bushland and Edward F. Knipling. It consists in using area-wide releases of sterile insects to reduce reproduction in a field population of the same species. Indeed wild female insects of the local population do not reproduce when they are inseminated by released sterilized males.
For mosquitoes, this technique has been successfully used to drastically reduce mosquito populations in some isolated regions (see e.g. \cite{Bossin, Zheng}).
In order to predict the dynamics of mosquito populations, mathematical modeling is an important tool. In particular, there is a growing interest in the study of control strategies (see e.g. \cite{Anguelov, Li, Bliman, MBE} and references therein).

In order to obtain rigorous results, these works usually neglect spatial dependency and only few articles propose to incorporate spatial variables in their study of the sterile insect technique.
In \cite{Lewis} the authors propose a simple scalar model to study the influence of the sterile insects density on the velocity of the spatial wave of spread of mosquitoes.
The work \cite{Oleron} focuses on the influence of the release sites and the frequency of releases in the effectiveness of the sterile insect technique and in \cite{Lee, Lee2}, the authors conduct a numerical study on some mathematical models with spatial dependency to investigate the use of a barrier zone to prevent invasion by mosquitoes.
However, to the best of our knowledge, there are no rigorous mathematical results on the existence of such barrier zones.
In this paper, we conduct a study similar to \cite{Lee} for another mathematical model which has recently been introduced in \cite{SBD}.
Moreover, we propose a strategy to rigorously prove the existence of barrier zones under appropriate conditions on the parameters.

The outline of the paper is the following: In the subsection 2.1 we introduce our dynamical system model and describe the variables and biological parameters, and also present a simplified model that results from additional assumptions. We analyze the existence of positive equilibria and the stability of the mosquito-free equilibrium. In subsection 2.2 we introduce spatial models including diffusion. In section 3 we perform numerical simulations for the spatial models (both the full and the simplified ones) to observe the existence of wave-blocking for a sufficiently large release of sterile males. In section 4 we give a sketch of a rigorous proof of the previous phenomenon, that will be presented in a forthcoming paper \cite{Jorge}, and we offer our conclusions.

\section{Mathematical model}

\subsection{Dynamical system}

Inspired by the recent paper \cite{SBD}, we propose the following mathematical model governing the dynamics of mosquitoes:
\begin{equation}\label{model1}
\begin{array}{l}
\ds \frac{dE}{dt} = b(1-\frac{E}{K}) F (1-e^{-\beta(M+\gamma M_s)}) \frac{M}{M+\gamma M_s} - (\nu_E + \mu_E) E,   \\[2mm]
\ds \frac{dM}{dt} = (1-r)\nu_E E - \mu_M M,  \\[2mm]
\ds \frac{dF}{dt} = r \nu_E E - \mu_F F, \\[2mm]
\ds \frac{dM_s}{dt}  = u - \mu_s M_s.
\end{array}
\end{equation}
In this system, the population mosquitoes is divided into several compartments. The number of mosquitoes in the aquatic phase is denoted $E$; $M$ and $F$ denote respectively the number of adult males and adult females; $M_s$ is the number of sterile male mosquitoes present, the release function being denoted by $u$.
The fraction $\frac{M}{M+ \gamma_s M_s}$ corresponds to the probability that a female mates with a wild mosquito.
Moreover, the term $(1-e^{-\beta(M+\gamma_s M_s)})$ has been introduced to model the fact that some male mosquitoes may not be fertile. It introduces a so-called Allee effect.
Finally, we have the following parameters~:
\begin{itemize}
\item $b>0$ is the oviposition rate;
\item $\mu_E>0$, $\mu_M>0$, $\mu_F>0$ and $\mu_s>0$ denote the death rates for the mosquitoes in the aquatic phase, for adults males, for adults females, and for sterile males, respectively;
\item $K$ is an environmental capacity for the aquatic phase, taking also into account the intraspecific competition;
\item $\nu_E>0$ is the rate of emergence;
\item $r\in (0,1)$ is the probability that a female emerges, then $(1-r)$ is the probability that a male emerges;
\item $u$ is a control function corresponding to the number of sterile males which are released into the field.
\end{itemize}

Since this system involves 4 equations and since we are interested in introducing the spatial dependency, it will be useful to simplify this model in order to be able to perform some rigorous mathematical analysis.
We first introduce the notations
$$
\tau = \frac{(1-r)\mu_F}{r\mu_M},
$$
and
\begin{equation}\label{eq:g}
g(F,M_s) = \frac{r\nu_E K b \tau F^2 (1-e^{-\beta(\tau F+\gamma_s M_s)})}{b\tau F^2 (1-e^{-\beta(\tau F+\gamma_s M_s)})+K(\nu_E+\mu_E)(\tau F + \gamma_s M_s)} - \mu_F F.
\end{equation}

Our first assumption concerns the male dynamics. Since males and females satisfy similar equations, it is reasonable to assume that the number of males is equal to a proportion of the number of females. Then, in order to keep the same equilibria, we assume that
\begin{equation}\label{M1}
M = \tau F.
\end{equation}
Moreover, we consider the situation in which we are in a favorable environment for mosquitos to spread. Then, we consider that the dynamics for the aquatic compartment is fast compared to the adult stage. It boils down to assume that the equation for $E$ in \eqref{model1} is at equilibrium, that is
$$
0 = b(1-\frac{E}{K}) F (1-e^{-\beta(M+\gamma M_s)}) \frac{M}{M+\gamma M_s} - (\nu_E + \mu_E) E
$$
It is equivalent to the following relation:
\begin{equation}\label{E1}
E = \frac{b  F (1-e^{-\beta(M+\gamma M_s)}) \frac{M}{M+\gamma M_s} }{ \frac{b}{K}  F (1-e^{-\beta(M+\gamma M_s)}) \frac{M}{M+\gamma M_s} + \nu_E + \mu_E}.
\end{equation}
Injecting \eqref{M1} and \eqref{E1} into the equation for $F$ in \eqref{model1}, we deduce a simplified model
\begin{equation}\label{model1_simple}
\frac{dF}{dt} = g(F,M_s), \qquad \frac{dM_s}{dt} = u - \mu_s M_s,
\end{equation}
where $g$ is defined in \eqref{eq:g}.

\begin{proposition}\label{prop:equilibria}
  Let us assume that $b r \nu_E > \mu_F (\nu_E + \mu_E)$.
  \begin{enumerate}
  \item When $u=0$.
    There exist at most two positive equilibria for systems \eqref{model1} and \eqref{model1_simple}. They are denoted $(\bar{E}_1,\bar{M}_1,\bar{F}_1,0)$ and $(\bar{E}_2,\bar{M}_2,\bar{F}_2,0)$ for \eqref{model1}, and $(\bar{F}_1,0)$ and $(\bar{F}_2,0)$ for \eqref{model1_simple}, with $0<\bar{F}_1<\bar{F}_2<\frac{Kr\nu_E}{\mu_F}$.
  \item There exists a positive constant $\tilde{U}$ large enough, such that if $u = \bar{U}$, where $\bar{U}$ is a constant such that $\bar{U}>\tilde{U}$, then the unique equilibrium for both systems \eqref{model1} and \eqref{model1_simple} is the mosquito-free equilibrium $(0, 0, 0, \bar{U}/\mu_s)$, which is globally stable.
  \end{enumerate}
\end{proposition}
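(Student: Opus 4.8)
The plan is to work primarily with the reduced system \eqref{model1_simple}, since the equilibria of \eqref{model1} are in bijection with those of \eqref{model1_simple} through the relations \eqref{M1} and \eqref{E1}: once an equilibrium $(\bar F,\bar M_s)$ of the reduced system is found, the corresponding $\bar M=\tau\bar F$ and $\bar E$ are recovered from \eqref{E1}. For the first item, $u=0$ forces $\bar M_s=0$ at any equilibrium, so the positive equilibria are exactly the positive roots of $F\mapsto g(F,0)$. Substituting $M_s=0$ into \eqref{eq:g}, cancelling one power of $F$ and clearing the denominator, the equation $g(F,0)=0$ with $F>0$ becomes
\begin{equation}\label{eq:phi}
\phi(F):=b\,(1-e^{-\beta\tau F})\,(Kr\nu_E-\mu_F F)=\mu_F K(\nu_E+\mu_E)=:C.
\end{equation}

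Next I would analyse the shape of $\phi$ on $(0,\infty)$. One has $\phi(0)=0$, with $\phi>0$ precisely on $(0,Kr\nu_E/\mu_F)$ and $\phi<0$ beyond; since $C>0$, every solution of \eqref{eq:phi} must lie in $(0,Kr\nu_E/\mu_F)$, which already gives the stated bound $\bar F_2<Kr\nu_E/\mu_F$. To get \emph{at most two} roots it suffices to show $\phi$ is unimodal. Computing $\phi'$ and setting it to zero reduces to the scalar equation $e^{-\beta\tau F}(\beta\tau Kr\nu_E+\mu_F-\beta\tau\mu_F F)=\mu_F$; the left-hand side is strictly decreasing until it vanishes and stays negative thereafter, so it meets the level $\mu_F$ exactly once. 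Hence $\phi'$ changes sign exactly once (from $+$ to $-$, as $\phi'(0)>0$ and $\phi'\to-b\mu_F<0$), $\phi$ increases then decreases, and $\phi(F)=C$ has at most two solutions. The hypothesis $br\nu_E>\mu_F(\nu_E+\mu_E)$ is what guarantees, in the regime where the Allee term is not too strong, that the maximum of $\phi$ exceeds $C$ so that the two equilibria are actually present; it plays the role of a basic-reproduction-number threshold.

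For the second item, $\dot M_s=\bar U-\mu_s M_s$ gives the unique equilibrium value $\bar M_s=\bar U/\mu_s$, and $g(0,\bar M_s)=0$ shows the mosquito-free state is always an equilibrium. To prove it is the only one for $\bar U$ large, I would show $g(F,\bar M_s)<0$ for every $F>0$. Bounding $1-e^{-\beta(\tau F+\gamma_s\bar M_s)}\le 1$ in the numerator of \eqref{eq:g}, $\ge 1-e^{-\beta\gamma_s\bar M_s}$ in the denominator, and then applying the arithmetic–geometric mean inequality to the denominator, one obtains, uniformly in $F>0$,
\begin{equation}\label{eq:amgm}
\frac{g(F,\bar M_s)}{F}\le \frac{r\nu_E}{2}\sqrt{\frac{Kb\tau}{(1-e^{-\beta\gamma_s\bar M_s})(\nu_E+\mu_E)\gamma_s\bar M_s}}-\mu_F .
\end{equation}
The right-hand side tends to $-\mu_F$ as $\bar M_s\to\infty$, so there is $\tilde U$ such that for $\bar U>\tilde U$ it is $\le-\mu_F/2<0$; thus $g(F,\bar M_s)<0$ for all $F>0$ and $F=0$ is the only equilibrium, which transfers to \eqref{model1} via \eqref{M1}--\eqref{E1}.

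Finally, for global stability I would exploit that $M_s$ solves a decoupled linear equation, so $M_s(t)\to\bar M_s$ exponentially and the $F$-equation is asymptotically autonomous. Since $g(\cdot,M_s)$ is decreasing in $M_s$ (because $B\mapsto(1-e^{-\beta B})/B$ is decreasing), the margin in \eqref{eq:amgm} can be arranged to hold for all $M_s$ in a neighbourhood of $\bar M_s$; together with the a priori bound $\dot F\le Kr\nu_E-\mu_F F$, which guarantees nonnegativity and boundedness, this yields $\dot F\le-\tfrac{\mu_F}{2}F$ for large $t$, hence $F(t)\to0$. For the full system \eqref{model1} the same mechanism acts as a cascade: once $M_s$ is large the mating probability $M/(M+\gamma M_s)$ is small, forcing $E\to0$, then $M\to0$ and $F\to0$. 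I expect this last point — upgrading convergence to the single equilibrium from local to \emph{global}, uniformly over initial data and for the coupled four-dimensional system — to be the main obstacle, requiring either a comparison/monotonicity argument or an explicit Lyapunov functional rather than the mere linearization sufficient for local stability.
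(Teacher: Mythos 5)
Your proof is correct and reaches the same conclusions by a genuinely different route in both parts. For part 1, the paper reduces $g(F,0)=0$ to the intersection of the straight line $g_1(F)=a-d-cF$ with the curve $g_2(F)=(a-cF)e^{-\beta\tau F}$ (where $a=Kbr\nu_E$, $c=b\mu_F$, $d=K\mu_F(\mu_E+\nu_E)$), then uses convexity of $g_2$ on $(0,a/c)$ together with a tangent-line comparison at $F=a/c$ to confine all intersections to $(0,a/c)$ and count at most two there. You instead show that $\phi(F)=b(1-e^{-\beta\tau F})(Kr\nu_E-\mu_F F)$ is unimodal by proving the critical-point equation $e^{-\beta\tau F}(\beta\tau Kr\nu_E+\mu_F-\beta\tau\mu_F F)=\mu_F$ has a unique root; your monotonicity claim checks out, since the derivative of the left side is $-\beta\tau e^{-\beta\tau F}(\beta\tau Kr\nu_E+2\mu_F-\beta\tau\mu_F F)$, which is negative up to and beyond the point where the left side vanishes, after which that side is negative and can never return to the level $\mu_F>0$. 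Your route is arguably cleaner, and notably the unimodality count does not even use the hypothesis $br\nu_E>\mu_F(\nu_E+\mu_E)$ --- appropriately, since the proposition only claims \emph{at most} two roots (your hedged side remark about existence is consistent with this). For part 2, the paper bounds the sign-determining numerator factor by the downward parabola $b\tau F(Kr\nu_E-\mu_F F)-K\mu_F(\nu_E+\mu_E)(\tau F+\gamma_s M_s)$ and maximizes it explicitly, which yields a closed-form threshold $\tilde U$; your AM--GM bound on the denominator of \eqref{eq:g} gives the same uniform conclusion $g(F,\bar M_s)<0$ for all $F>0$ with a less explicit but equally valid threshold. The trade-off: the paper's computation buys an explicit formula for $\tilde U$, yours is shorter and more robust to perturbations of the nonlinearity.

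On global stability, your treatment is in fact \emph{more} careful than the paper's. The paper evaluates the Jacobian at the mosquito-free equilibrium (diagonal for \eqref{model1_simple}, triangular for \eqref{model1}, with negative spectrum) and directly asserts global stability, which on its own only yields local asymptotic stability; the missing step is precisely what you supply for \eqref{model1_simple}: $M_s$ solves a decoupled linear ODE and converges exponentially, $g(\cdot,M_s)$ is decreasing in $M_s$ (your observation that $B\mapsto(1-e^{-\beta B})/B$ is decreasing is correct), the negativity margin persists on a neighbourhood of $\bar M_s$, and the a priori bound forces $F\to 0$ by comparison. For the full four-dimensional system your cascade remains a sketch --- one would need, e.g., invariance of $\{E\le K\}$ and a comparison argument threaded through the triangular structure to make ``$E\to0$, then $M,F\to0$'' rigorous --- but since the paper offers nothing beyond the linearization there, the gap you honestly flag does not put your argument behind the paper's own proof.
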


\begin{proof} For $u=0$, the only equilibrium of $\frac{dM_s}{dt}  = u - \mu_s M_s$ is $M_s=0$. Substituting $M_s=0$ in \eqref{eq:g}, we get
$$g(F,0)=F\frac{b(Kr\nu_E-\mu_FF)(1-e^{-\beta\tau F})-K\mu_F(\mu_E+\nu_E)}{bF(1-e^{-\beta\tau F})+K(\mu_E+\nu_E)}$$

We prove that $g(F,0)$ has at most two positive roots. Its denominator is always positive, and its numerator can be expressed as $F((a-cF)(1-e^{-\beta\tau F})-d)$, where $a=Kbr\nu_E$, $c=b\mu_F$, $d=K\mu_F(\mu_E+\nu_E)$. Therefore, if $F>0$, then $g(F,0)=0 \iff a-d-cF=(a-cF)e^{-\beta\tau F}$. Let $g_1(F)=a-d-cF, g_2(F)=(a-cF)e^{-\beta\tau F}$.

Note that $a-d=K(br\nu_E-\mu_F(\mu_E+\nu_E))>0$ by hypothesis, and $g_1(0)=a-d, \ g_2(0)=a$. Hence, $0<g_1(0)<g_2(0)$. Moreover, we have that $g_2'(F)=e^{-\beta\tau F}(\beta\tau cF-a\beta\tau-c)$ and
$$
g_2''(F)=\beta\tau e^{-\beta\tau F}(a\beta\tau+2c-\beta\tau cF).
$$
Therefore $g_2$ has a global minimum at $F=\frac{a}{c}+\frac1{\beta\tau}$. Note that for $F=a/c$, we have $g_1(a/c)=-d<0=g_2(a/c)$, $0>g_2'(a/c)=-ce^{-a\beta/c}>g_1'(a/c)=-c$. The tangent line to $g_2(F)$ at $F=a/c$ is $g_3(F)=-ce^{-a\beta\tau/c}(F-a/c)$. Since $g_2(F)$ is convex on $(\frac{a}{c}, \frac{a}{c}+\frac1{\beta\tau})$, it follows that $g_2(F)$ lies above $g_3(F)$ on this interval. Therefore, this will also be the case for $F\geq\frac{a}{c}+\frac1{\beta\tau}$, because for  $F\geq\frac{a}{c}+\frac1{\beta\tau}$, $g_2(F)$ is increasing and $g_3(F)$ is decreasing. 

Furthermore, $g_3(F)>g_1(F)$, for all $F\geq \frac{a}{c}$. Hence, $g_1(F)<g_2(F)$, for all $F\geq \frac ac$. Therefore, any positive intersections of $g_1$ and $g_2$ must lie on $(0, a/c)$. Since $g_1$ is a straight line and $g_2$ is convex on $(0, a/c)$, they can have at most two intersections, so  $F\mapsto g(F,0)$ has at most two positive roots $0<\bar{F}_1<\bar{F}_2<\frac ac$.

Therefore, \eqref{model1_simple} has at most two equilibria $(\bar{F}_1,0)$ and $(\bar{F}_2,0)$ and substituting $\bar{F}_1, \bar{F}_2$ into \eqref{M1} and \eqref{E1} gives us the associated equilibria $(\bar{E}_1,\bar{M}_1,\bar{F}_1,0)$ and $(\bar{E}_2,\bar{M}_2,\bar{F}_2,0)$ for \eqref{model1}, which concludes the proof of the first part.\\

For the second part, for a given constant $u=\bar{U}$, the only equilibrium of $\frac{dM_s}{dt}  = u - \mu_s M_s$ is $M_s=\bar{U}/\mu_s$. On the other hand, since its denominator is always positive, the sign of $g(F, M_s)$ depends on the sign of the factor on the numerator
$$
b\tau F(Kr\nu_E-\mu_FF)(1-e^{-\beta(\tau F+\gamma_s M_s)})- K\mu_F(\nu_E+\mu_E)(\tau F + \gamma_s M_s).
$$
This numerator is negative for all $M_s\geq0, F>\frac{Kr\nu_E}{\mu_F}$. For $0\leq F<\frac{Kr\nu_E}{\mu_F}$, using the obvious inequality $0<1-e^{-\beta(\tau F+\gamma_s M_s)}\leq1$, we can bound from above this latter factor by a downward parabola in $F$,
$$
b\tau F(Kr\nu_E-\mu_FF)- K\mu_F(\nu_E+\mu_E)(\tau F + \gamma_s M_s),
$$
This parabola reaches its maximum at $F=\frac{K(br\nu_E-\mu_F(\nu_E+\mu_E))}{2b\mu_F}\in(0,\frac{Kr\nu_E}{\mu_F})$ by assumption, and the maximum value equals 
$\frac{\tau K^2(br\nu_E-\mu_F(\nu_E+\mu_E))^2}{4b\mu_F}-K\mu_F(\nu_E+\mu_E)\gamma M_s$. It is negative for all $M_s$ large enough, that is, for all
$$\bar{U}>\tilde{U}=\frac{\tau\mu_s K(br\nu_E-\mu_F(\nu_E+\mu_E))^2}{4b\mu_F^2(\nu_E+\mu_E)\gamma}.$$
(Note that if $br\nu_E\leq\mu_F(\nu_E+\mu_E)$ then this parabola, and by extension $g(F,0)$, would be negative for all $F>0$, so that $br\nu_E>\mu_F(\nu_E+\mu_E)$ is a necessary condition for $g(F,0)$ to have three non-negative equilibria.)\\

Therefore, for $\bar{U}$ large enough, so that $g(F,M_s)<0$ for all $F>0$, the only equilibrium for \eqref{model1_simple} is the mosquito-free equilibrium.
Evaluating the Jacobian at $(0, \bar{U}/\mu_s)$ we get a diagonal matrix $\binom{-\mu_F\quad 0}{0\quad -\mu_s}$ and thus the mosquito-free equilibrium is globally stable. Likewise, the Jacobian at the mosquito-free equilibrium for \eqref{model1} is a triangular matrix and its eigenvalues, that is, the diagonal  elements, are all negative:
$$
J(0,0,0,\bar{U}/\mu_s)=
  \begin{pmatrix}
    -(\nu_E+\mu_E) & 0 & 0 & 0 \\
    (1-r)\nu_E & -\mu_M & 0 & 0 \\
    r\nu_E& 0 & -\mu_F & 0 \\
    0 & 0 & 0 & -\mu_s \\
  \end{pmatrix}
$$
Therefore, the mosquito-free equilibrium is globally stable for \eqref{model1} as well.
\end{proof}

\begin{remark}
  Actually, the model proposed in \cite{SBD} is different from \eqref{model1}, since it does not consider adult females but adult females that have been fertilized $F_m$. Then the model in \cite{SBD} reads
\begin{equation}\label{model2}
\begin{array}{l}
\ds \frac{dE}{dt} = b(1-\frac{E}{K}) F_m - (\nu_E + \mu_E) E,   \\[2mm]
\ds \frac{dM}{dt} = (1-r)\nu_E E - \mu_M M,  \\[2mm]
\ds \frac{dF_m}{dt} = r \nu_E E (1-e^{-\beta(M+\gamma M_s)}) \frac{M}{M+\gamma M_s} - \mu_F F_m, \\[2mm]
\ds \frac{dM_s}{dt}  = u - \mu_s M_s.
\end{array}
\end{equation}
It is not difficult to show that the same results as the ones in Proposition \ref{prop:equilibria} holds also for this system.
It is interesting to compare the numerical results between the different models.
\end{remark}

\subsection{Spatial model}

In order to model the spatial dynamics, we consider that adult mosquitoes diffuse according to a random walk. It is classical to model this active motion by adding a diffusion operator in the adult compartments. We denote by $x$ the spatial variable. In order to simplify the approach, we only consider the one-dimensional case ($x\in\RR$). Then, all unknown functions depend now on time $t>0$ and position $x\in\RR$. The resulting model from \eqref{model1} reads
\begin{equation}\label{model_x}
\begin{array}{l}
\ds \frac{dE}{dt} = b(1-\frac{E}{K}) F(1-e^{-\beta(M+\gamma M_s)}) \frac{M}{M+\gamma M_s} - (\nu_E + \mu_E) E,   \\[2mm]
\ds \pa_t M - D_u \pa_{xx} M = (1-r)\nu_E E - \mu_M M,  \\[2mm]
\ds \pa_t F - D_u \pa_{xx} F = r \nu_E E  - \mu_F F, \\[2mm]
\ds \pa_t M_s - D_ u \pa_{xx} M_s = u - \mu_s M_s.
\end{array}
\end{equation}
In this model $D_u$ is a given diffusion coefficient (which, for simplicity, in this work is assumed to be the same for the three adult populations, but we can also consider more general cases).

Since it is hard to obtain analytical results for this system, we consider the simplified model deduced from \eqref{model1_simple}
\begin{equation}\label{model_x_simple}
\begin{array}{l}
  \ds \pa_t F - D_u \pa_{xx} F = g(F,M_s), \\[2mm]
  \ds \pa_t M_s - D_u \pa_{xx} M_s = u - \mu_s M_s.
\end{array}
\end{equation}

An important observation in the case when $M_s$ is a non-negative constant, is that system \eqref{model_x_simple} simplifies into a scalar reaction-diffusion equation with a bistable right hand side:
  $$
  \pa_t F - D_u \pa_{xx} F = g(F,M_s).
  $$
  Indeed, we have seen in Proposition \ref{prop:equilibria} that there exists $\bar{M_s}$ such that for any $0\leq M_s < \bar{M_s}$, the function $F\mapsto g(F,M_s)$ admits two positive roots $\bar{F_1}$ and $\bar{F_2}$ and for any $M_s\in [0,\bar{M_s})$, we have $g(F,M_s)<0$ for $F\in (0,\bar{F}_1)$, and $g(F,M_s)>0$ for $F\in (\bar{F}_1,\bar{F}_2)$.  \\
  It is now well-established (see e.g. \cite{book}) that there exists an unique traveling wave with a speed which has the same sign as the quantity $\int_0^{\bar{F}_2} g(F,M_s) \,dF$.
  Then, one possibility to avoid spreading of mosquitoes is to investigate the possibility to find a constant $M_s$ such that $\int_0^{\bar{F}_2} g(F,M_s) \,dF<0$.
  Such problem has been investigated in \cite{LewisVDDriessche}.

  In order to illustrate this observation, we perform some numerical computations of models \eqref{model_x} and \eqref{model_x_simple}. These models are discretized thanks to a finite difference scheme on an uniform grid mesh. We use the numerical values in Table \eqref{table}.
  We display in Figure \ref{fig_TW} a comparison between numerical solutions for the two models \eqref{model_x} and \eqref{model_x_simple}.

  \begin{figure}
    \includegraphics[width=5.5cm]{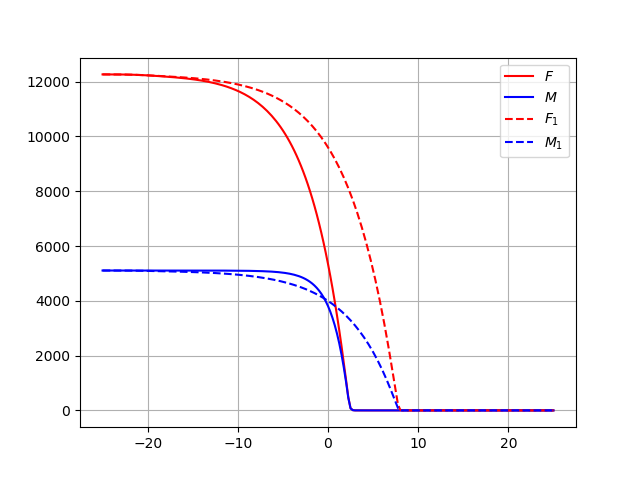}\hfill
    \includegraphics[width=5.5cm]{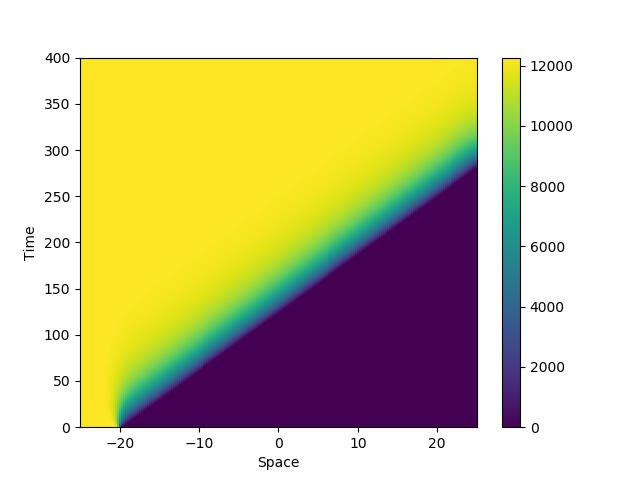}\hfill
    \includegraphics[width=5.5cm]{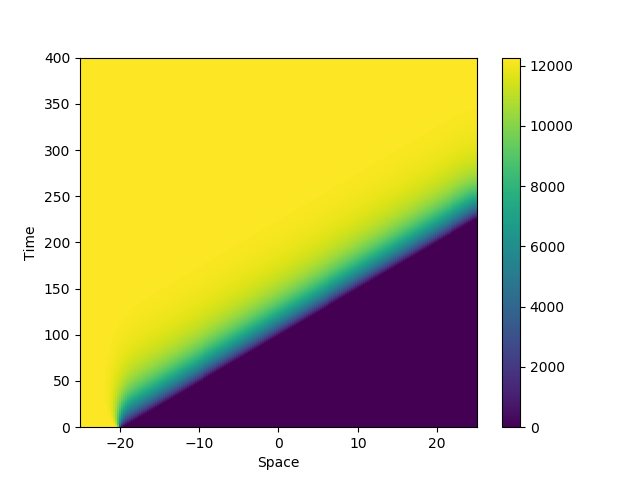}
    \caption{Comparison of the dynamics of $(F,M)$ solving \eqref{model_x} and $(F_1,M_1)$ solving the simplified model \eqref{model_x_simple}. Left : Profiles of solutions at time $T=140$ with same initial data. Center : Dynamics in time and space of the females density for system \eqref{model_x}. Right : Dynamics in time and space of the females density for system \eqref{model_x_simple}. }\label{fig_TW}
  \end{figure}

  \begin{remark}
    The spatial model for \eqref{model2} reads (in the case where the diffusion of all adult types is the same, as before)
    \begin{equation}\label{model2_x}
      \begin{array}{l}
        \ds \frac{dE}{dt} = b(1-\frac{E}{K}) F_m - (\nu_E + \mu_E) E,   \\[2mm]
        \ds \pa_t M - D_u \pa_{xx} M = (1-r)\nu_E E - \mu_M M,  \\[2mm]
        \ds \pa_t F_m - D_u \pa_{xx} F_m = r \nu_E E (1-e^{-\beta(M+\gamma M_s)}) \frac{M}{M+\gamma M_s} - \mu_F F_m, \\[2mm]
        \ds \pa_t M_s - D_u \pa_{xx} M_s  = u - \mu_s M_s.
      \end{array}
    \end{equation}
  \end{remark}

In this paper, we want to investigate the possibility of blocking the propagation  of the spreading of mosquitoes by releasing sterile mosquitoes on a band of width $L$. May the sterile insect technique be used to act as a barrier to avoid re-invasion of mosquitoes in a free-mosquito region ?
In order to answer this question, we first perform some numerical simulations in the next section.

\section{Numerical simulations}

We choose $u(t,x) = \bar{U} \mathbf{1}_{[0,L]}(x)$, where $\bar{U}$ is a given positive constant. We propose some numerical simulations.
As above we implement a finite difference scheme on an uniform grid.
The values of the numerical parameters are taken from \cite{SBD} and are given in Table \ref{table}.

\begin{table}[h]
\centering\begin{tabular}{|l|c|c|c|c|c|c|c|c|c|c|}
  \hline
  Parameter & $\beta$ & b & r & $\mu_E$ & $\nu_E$ & $\mu_F$ & $\mu_M$ & $\gamma_s$ & $\mu_s$ & $D_u$  \\
  \hline
  Value & $10^{-2}$ & $10$ & $0.49$ & $0.03$ & $0.05$ & $0.04$ & $0.1$ & $1$ & $0.12$ & $0.0125$ \\
  \hline
\end{tabular}
\caption{Table of the numerical values used for the numerical simulations. These values are taken from \cite{SBD}}
\label{table}
\end{table}

We present in Figures \ref{fig1} and \ref{fig2} the dynamics in time and space of the female density $F$ for models \eqref{model_x} and \eqref{model_x_simple}, respectively. In both figures, we assume that the domain where the release of sterile males is perform is of width $L=5\text{ km}$. The release intensity is $\bar{U}=10\,000\text{ km}^{-2}$ (left), $\bar{U}=15\,000\text{ km}^{-2}$ (center), $\bar{U}=20\,000\text{ km}^{-2}$ (right).
We first notice that it seems that for sufficiently large $\bar{U}$, the mosquito wave is not able to pass through the release zone. On the contrary, if $\bar{U}$ is not large enough, the wave is only delayed by the release zone.
Comparing Figures \ref{fig1} and \ref{fig2}, we notice that the delay is more important for the solution of model \eqref{model_x} than for the solution of the simplified model \eqref{model_x_simple}. This is not surprising since we have already observed in Figure \ref{fig_TW} that the wave propagation is faster for the simplified model.

\begin{figure}[h!]
  \includegraphics[width=5.5cm]{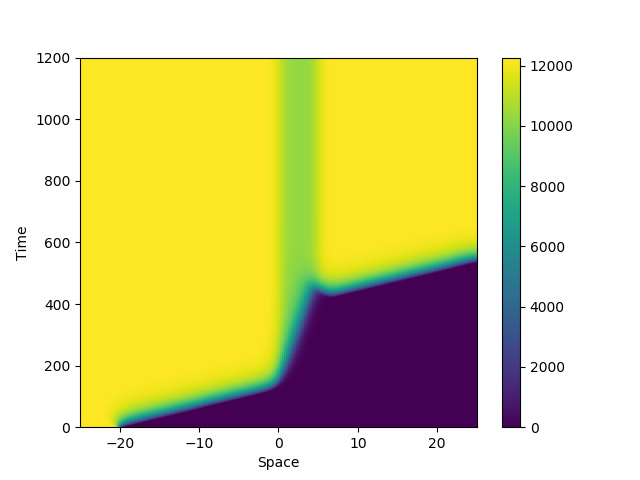}\hfill
  \includegraphics[width=5.5cm]{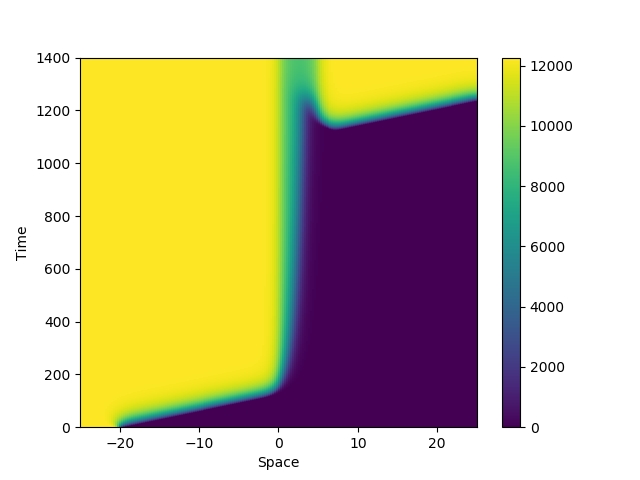}\hfill
  \includegraphics[width=5.5cm]{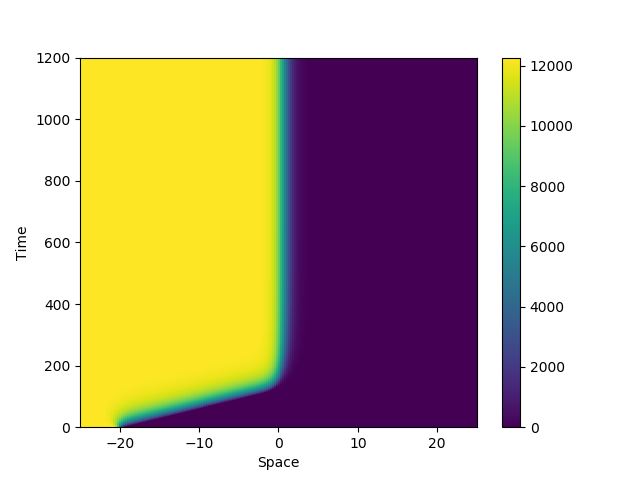}
  \caption{Numerical simulations of system \eqref{model_x} with $L=5$ km and $\bar{U}=10\,000\text{ km}^{-2}$ (left), $\bar{U}=15\,000\text{ km}^{-2}$ (center), and $\bar{U}=20\,000\text{ km}^{-2}$ (right).}\label{fig1}
\end{figure}

\begin{figure}[h!]
  \includegraphics[width=5.5cm]{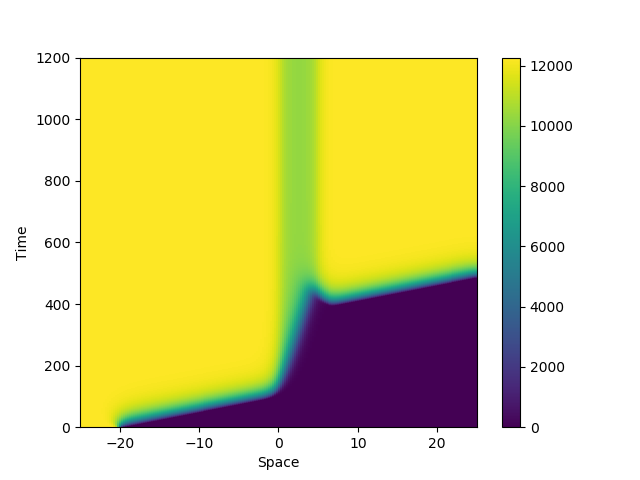}\hfill
  \includegraphics[width=5.5cm]{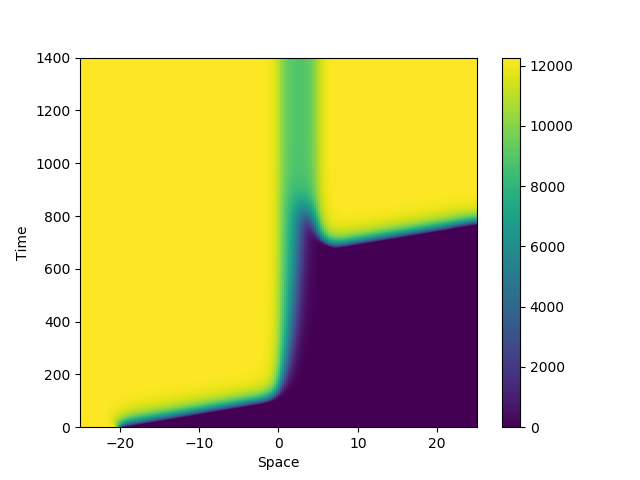}\hfill
  \includegraphics[width=5.5cm]{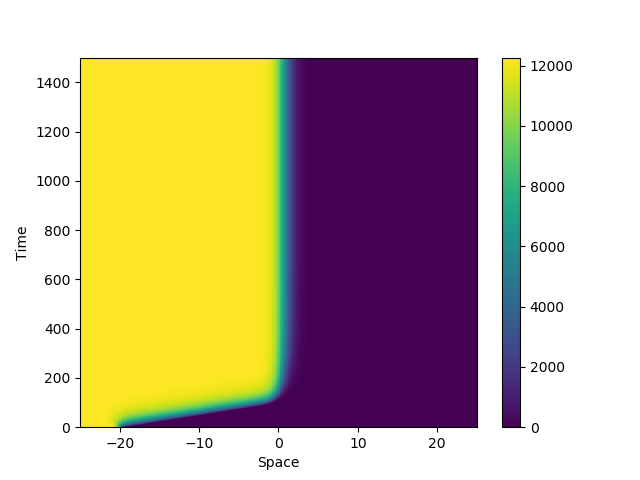}
  \caption{Numerical simulations of the simplified model with $L=5$ km and $\bar{U}=10\,000\text{ km}^{-2}$ (left), $\bar{U}=15\,000\text{ km}^{-2}$ (center), and $\bar{U}=20\,000\text{ km}^{-2}$ (right). To be compared with Fig. \ref{fig1}}
  \label{fig2}
\end{figure}

It is also interesting to observe that when $\beta\to +\infty$, there is no blocking as illustrated in Figure \ref{fig:betainf} for model \eqref{model_x}. This observation may be easily explained for the simplified model. Indeed, when $\beta\to +\infty$, the expression \eqref{eq:g} simplifies into
$$
g(F,0) = \frac{r\nu_E K b F}{b F + K(\nu_E+\mu_E)} - \mu_F F.
$$
Then, $F\mapsto g(F,0)$ admits only two roots $F=0$ and $F=\frac{K(r\nu_E b - \mu_F(\nu_E+\mu_E))}{b \mu_F}$.
Therefore, it is a monostable function, for which the mosquito-free steady state is unstable. As a consequence, if an exponentially small number of mosquitoes cross the region of blocking, it is enough for them to reproduce and to converge to the positive steady state.
We can also verify that when $\beta\to +\infty$ the mosquito-free steady state equilibrium is unstable for model \eqref{model1}. Indeed, putting $M_s=0$ in the system \eqref{model1} and letting $\beta\to +\infty$, the Jacobian matrix of the resulting system at the point $(0,0,0)$ is given by
$$
\begin{pmatrix}
  -\nu_E-\mu_E & 0 & b \\ (1-r)\nu_E & -\mu_M & 0 \\ r \nu_E & 0 & -\mu_F
\end{pmatrix}
$$
Thus, when $b>\mu_F$ this steady state is unstable.

\begin{figure}
  \includegraphics[width=8.5cm]{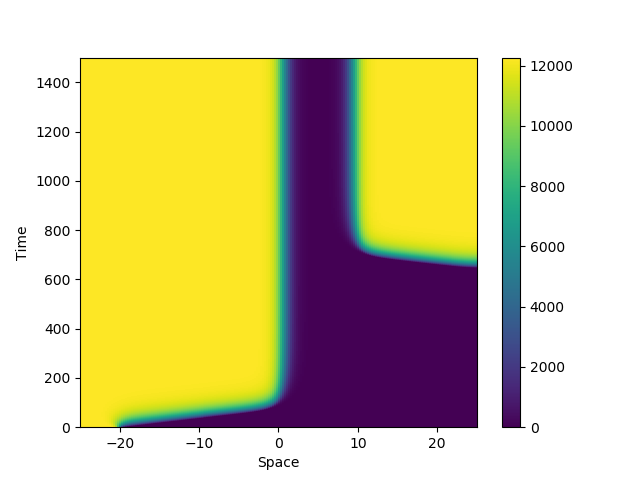}\hfill
  \includegraphics[width=8.5cm]{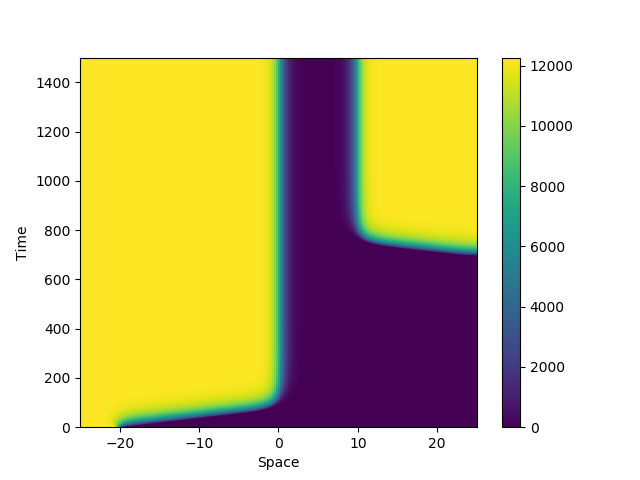}
  \caption{Numerical simulations for the model \eqref{model_x} with $\beta\to +\infty$ and $L=10$\,km~: $\bar{U}=20\,000\text{ km}^{-2}$ (left); $\bar{U}=30\,000\text{ km}^{-2}$ (right).}\label{fig:betainf}
\end{figure}

Finally, we perform some numerical simulations for the system \eqref{model2_x} in order to compare the behaviour of solutions for this system with system \eqref{model_x}.
The time and space dynamics of $F_m$ is displayed in Figures \ref{fig1_model2} and \ref{fig2_model2}.
In Figure \ref{fig1_model2}, the width of the domain $L$ is fixed and we change the intensity of the release $\bar{U}$.
As in Figure \ref{fig1}, we observe that by increasing the intensity of the release $\bar{U}$, we may block the propagation.
In Figure \ref{fig1_model2}, we make the same observation that when $\beta\to +\infty$, the wave is able to cross the active domain, even for $\bar{U}$ and $L$ much larger than what is needed to stop the propagation.
These numerical results are in accordance with what we saw for the previous model.

\begin{figure}[h!]
  \includegraphics[width=5.5cm]{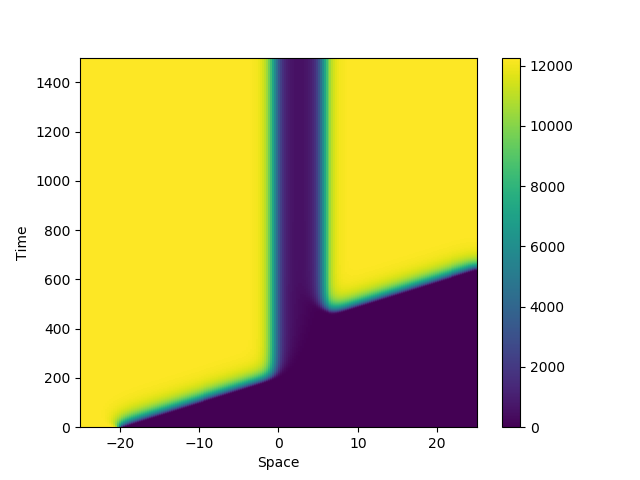}\hfill
  \includegraphics[width=5.5cm]{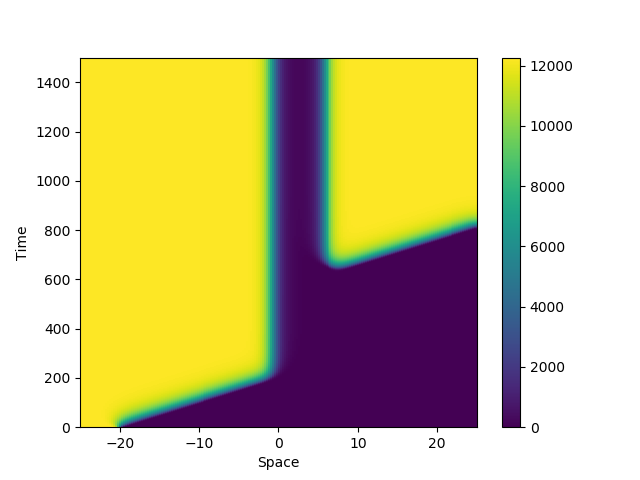}\hfill
  \includegraphics[width=5.5cm]{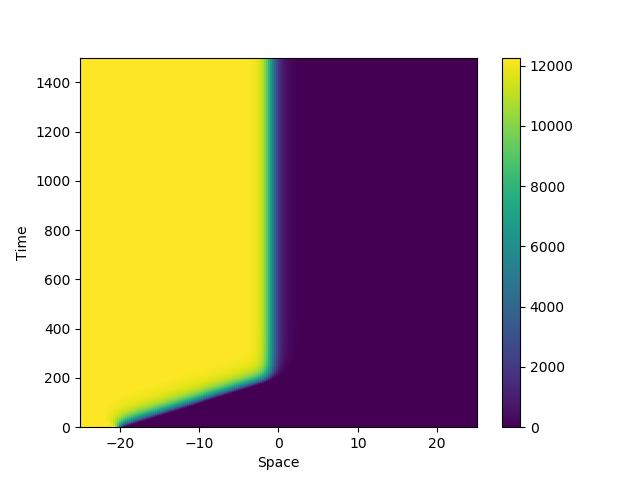}
  \caption{Numerical simulations for model \eqref{model2_x} with $L=5$ km and $\bar{U}=10\,000\text{ km}^{-2}$ (left), $\bar{U}=20\,000\text{ km}^{-2}$ (center), and $\bar{U}=30\,000\text{ km}^{-2}$ (right).}
  \label{fig1_model2}
\end{figure}

\begin{figure}[h!]
  \includegraphics[width=5.5cm]{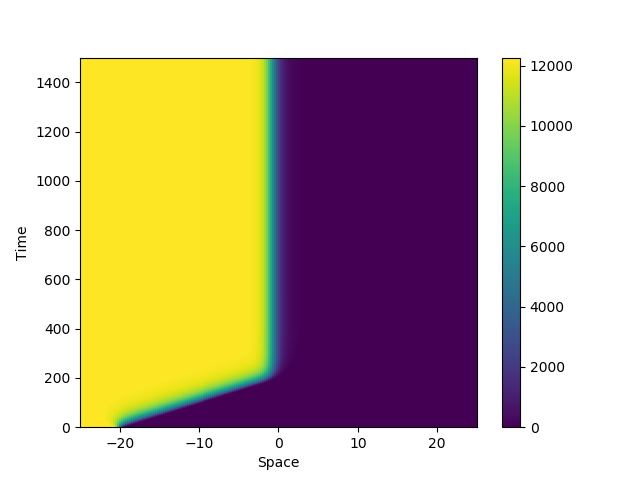}\hfill
  \includegraphics[width=5.5cm]{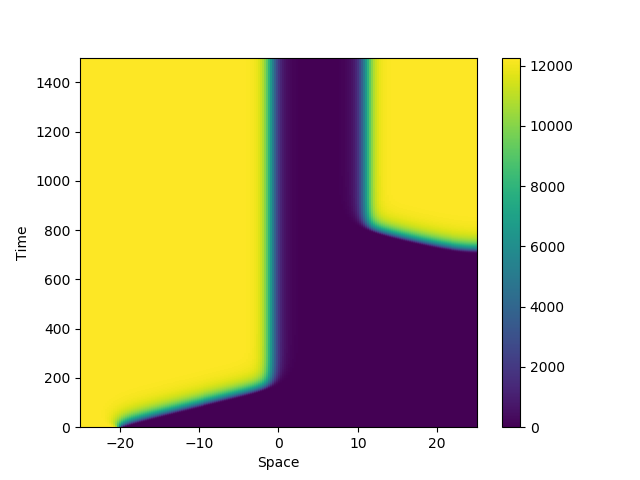}\hfill
  \includegraphics[width=5.5cm]{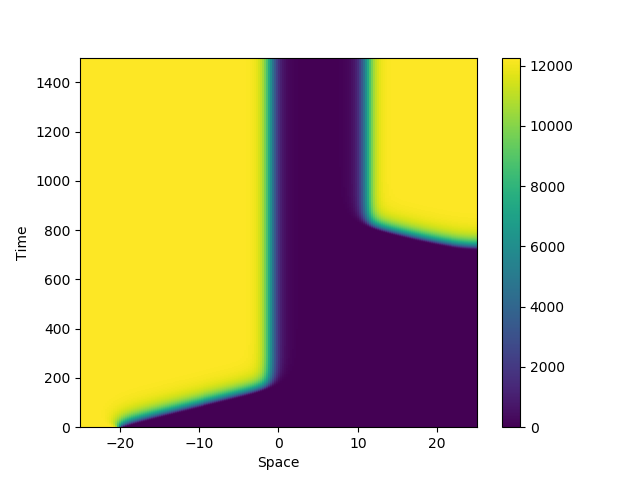}
  \caption{Numerical simulations for model \eqref{model2_x}. Left : $L=10$ km, $\bar{U}=20\,000\text{ km}^{-2}$ and $\beta = 10^{-2}$. Center :  $L=10$ km, $\bar{U}=30\,000\text{ km}^{-2}$ and $\beta\to +\infty$. Right : $L=10$ km, $\bar{U}=40\,000\text{ km}^{-2}$ and $\beta\to +\infty$.}
  \label{fig2_model2}
\end{figure}

\section{Mathematical approach}

These numerical simulations indicate that it should be possible to block the spreading by releasing enough sterile males on a sufficiently wide domain.
However, to be sure that it is not an numerical artefact and that the propagation is really blocked, we have to prove rigorous mathematical results.
The study of wave blocking by local action has been done by several authors with applications for instance in biology or in criminal studies \cite{Pauwelussen, Lewis, Chapuisat, Rodriguez, block, Eberle}.

In \cite{Jorge}, we apply the theory developed e.g. in \cite{Lewis} to prove existence of a blocking for the simplified model.
Let us consider the simpified model \eqref{model_x_simple} with $u=\bar{U} \mathbf{1}_{[0,L]}$.
We call \textbf{barrier} for \eqref{model_x_simple}, any stationnary solution, i.e. any solution $(\widetilde{F},\widetilde{M_s})$ to
\begin{equation}\label{eq:block}
\begin{array}{l}
  \ds - D_u \widetilde{F}'' = g(\widetilde{F},\widetilde{M_s}), \\[2mm]
  \ds - D_u \widetilde{M_s}'' + \mu_s \widetilde{M_s} = \bar{U} \mathbf{1}_{[0,L]}.
\end{array}
\end{equation}

  The main result in \cite{Jorge} is the existence of a barrier for $L$ and $\bar{U}$ large enough:
\begin{theorem}\label{th}
  There exists $L^*>0$ such that
  \begin{itemize}
  \item For $L<L^*$ there is no barrier for \eqref{model_x_simple}.
  \item For $L>L^*$, there exists $\bar{U}^*(L)$ such that for all $ \bar{U} > \bar{U}^*(L)$ there exists a barrier for \eqref{model_x_simple}.  \\
    Moreover, we have $\lim_{L\to L^*} \bar{U}^*(L) = +\infty$, $\bar{U}^*$ is decreasing with respect to $L$, and $\bar{U}^*(L) = O(\frac{1}{(L-L^*)^2})$ as $L\underset{>}{\to} L^*$.
    Furthermore, $\lim_{L\to +\infty} \bar{U}^*(L)$ exists and is bounded from below by $M_\infty$ such that $\int_0^{\bar{F}_2} g(F,M_\infty)\,dF = 0$.
  \end{itemize}
\end{theorem}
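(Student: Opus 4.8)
The plan is to exploit the triangular structure of \eqref{eq:block}: the second equation is linear and decoupled from $\widetilde F$, so I would first solve it explicitly. With $k=\sqrt{\mu_s/D_u}$ one gets a symmetric bump
$$
\widetilde{M_s}(x)=\frac{\bar{U}}{\mu_s}\Bigl(1-e^{-kL/2}\cosh\bigl(k(x-\tfrac L2)\bigr)\Bigr)\quad\text{on }[0,L],
$$
with exponentially decaying tails outside, peaked at $x=L/2$ at height $\frac{\bar{U}}{\mu_s}(1-e^{-kL/2})$. The two facts I would record are that $\widetilde{M_s}$ is pointwise strictly increasing in $\bar{U}$ and in $L$, and that $M_s\mapsto g(F,M_s)$ is strictly decreasing, so larger releases make the reaction more hostile. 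Substituting this profile reduces the problem to finding a monotone solution $\widetilde F$ of the scalar, spatially heterogeneous bistable equation $-D_u\widetilde F''=g(\widetilde F,\widetilde{M_s}(x))$ with $\widetilde F(-\infty)=\bar{F}_2$ and $\widetilde F(+\infty)=0$, i.e. a heteroclinic orbit between the two saddles $\bar{F}_2$ and $0$ of the far-field autonomous systems, pinned by the heterogeneity.

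For the existence half, I would combine a comparison argument with the mechanical (phase-plane) formulation $D_u\widetilde F''=-\partial_F G(\widetilde F,\widetilde{M_s})$, where $G(F,M_s)=\int_0^F g(s,M_s)\,ds$. The constant $\bar{F}_2$ is a supersolution since $g(\bar{F}_2,\widetilde{M_s})\le g(\bar{F}_2,0)=0$, so it suffices to build a nontrivial subsolution that is close to $\bar{F}_2$ far to the left and decays to $0$ to the right; this can be done on large bounded intervals and passed to the limit, and it is here that ``$L$ and $\bar{U}$ large'' is used, the suppression having to be strong enough to force the profile down. The comparison principle then delivers the monotonicity underlying the whole statement: if a barrier exists for $(L,\bar{U})$ it exists for every $(L',\bar{U}')$ with $L'\ge L$ and $\bar{U}'\ge\bar{U}$, because the corresponding $\widetilde{M_s}$ is pointwise larger and $g$ is smaller. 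This monotonicity is exactly what lets me define $\bar{U}^*(L)$ as the infimum of admissible releases and conclude it is nonincreasing in $L$.

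To locate the threshold $L^*$ and prove nonexistence below it, I would analyze the strong-release limit. Writing $\widetilde{M_s}=\frac{\bar{U}}{\mu_s}\,m(x)$ for a fixed normalized profile $m$, and using $g(F,M_s)\to-\mu_F F$ as $M_s\to\infty$, the core of the band degenerates to the linear saddle equation $D_u\widetilde F''=\mu_F\widetilde F$; a barrier then exists only if the unstable manifold of $\bar{F}_2$, launched from the region where $\widetilde{M_s}$ is still below the bistability threshold $\bar{M_s}$, can be threaded through the suppressed core onto the stable manifold of $0$. I expect this matching to be solvable precisely when the core is wide enough, which determines $L^*$, while a narrower core must be compensated by a taller and steeper profile, so that the required amplitude diverges and $\lim_{L\to L^*}\bar{U}^*(L)=+\infty$. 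The quadratic rate $\bar{U}^*(L)=O\bigl((L-L^*)^{-2}\bigr)$ should follow from a generic quadratic tangency in the unfolding of this matching condition near $L=L^*$.

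Finally, for the wide-band limit I would use the energy identity along the profile. Setting $E_0(x)=\frac{D_u}{2}(\widetilde F')^2+G(\widetilde F,0)$ gives $E_0'=\widetilde F'\,(g(\widetilde F,0)-g(\widetilde F,\widetilde{M_s}))\le0$, since $\widetilde F'\le0$ and $g$ is decreasing in $M_s$; integrating between the endpoints yields $G(\bar{F}_2,0)=\int_{\RR}|\widetilde F'|\,(g(\widetilde F,0)-g(\widetilde F,\widetilde{M_s}))\,dx$. As $L\to+\infty$ the band approaches a half-line on which $\widetilde{M_s}\approx\bar{U}/\mu_s$ is essentially constant, so blocking reduces to the standing-front (equal-area) condition $\int_0^{\bar{F}_2}g(F,\bar{U}/\mu_s)\,dF\le0$, which forces the suppression level past the value $M_\infty$ defined by $\int_0^{\bar{F}_2}g(F,M_\infty)\,dF=0$; together with monotonicity this shows $\lim_{L\to+\infty}\bar{U}^*(L)$ exists and is bounded below by $M_\infty$. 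The hard part of the whole argument is the non-autonomous matching in the strong-release limit: controlling the trajectory quantitatively through the heterogeneous core to pin down the sharp threshold $L^*$ and, especially, to justify the $(L-L^*)^{-2}$ blow-up, which requires a delicate phase-plane and boundary-layer analysis rather than the soft comparison and energy arguments used elsewhere.
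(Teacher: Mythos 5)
Your proposal shares the strategic skeleton of the paper's argument: the explicit resolution of the decoupled linear equation for $\widetilde{M_s}$ (your formula is correct), the monotonicity of $g$ in $M_s$ and of $\widetilde{M_s}$ in $(L,\bar{U})$ yielding that $\bar{U}^*$ decreases in $L$, the strong-release limit $g(F,M_s)\to -\mu_F F$ as $M_s\to+\infty$ to locate $L^*$, and a comparison with the homogeneous bistable equation for the lower bound $M_\infty$. All four ingredients appear in the paper. Where you diverge is the existence mechanism: the paper follows the Lewis--Keener geometric method, building the barrier by intersecting, in the phase plane of $-D_u\widetilde{F}''=g(\widetilde{F},0)$, the stable manifold of the stable equilibrium with the image of the homoclinic orbit under the traversal map of the release zone; the threshold $L^*$, the monotonicity of $\bar{U}^*$, and the rate $O((L-L^*)^{-2})$ (via a first-order Taylor expansion) are all read off from that single intersection condition, whereas you substitute sub/supersolutions and an energy identity.

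That substitution is where the genuine gaps sit, and they sit exactly under the theorem's hardest claims. First, in your existence step the supersolution $\bar{F}_2$ does not decay at $+\infty$, so the solution obtained by passing to the limit from bounded intervals may tend to $\bar{F}_2$ (or $\bar{F}_1$) on the right instead of $0$; certifying decay requires a supersolution lying on or below the stable manifold of $0$ to the right of the band, which is precisely what the paper's geometric gluing provides and your sketch never constructs. The same defect affects your monotonicity step: for larger $(L',\bar{U}')$ the old barrier is indeed a supersolution, but without a nontrivial subsolution pinned at $\bar{F}_2$ on the left, the solution trapped between $0$ and it may be trivial, so the implication ``barrier persists under enlarging $(L,\bar{U})$'' is not yet proved. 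Second, the two central claims --- nonexistence of any barrier for $L<L^*$ (for \emph{every} $\bar{U}$) and the blow-up rate $\bar{U}^*(L)=O((L-L^*)^{-2})$ --- are only conjectured in your text (``I expect'', ``should follow from a generic quadratic tangency''); nothing in your argument identifies the exponent $2$, and for nonexistence you need, and never state, the fact that $g(F,M_s)\ge -\mu_F F$ for all $M_s$ together with a monotone comparison of traversal maps, so that failure of the limiting linear dynamics to thread the manifolds rules out all finite $\bar{U}$. Third, your energy identity $E_0'=\widetilde{F}'\bigl(g(\widetilde{F},0)-g(\widetilde{F},\widetilde{M_s})\bigr)$ is correct but presupposes a monotone barrier, which is not known a priori, and your reduction of the wide-band limit to the equal-area condition is heuristic; the paper instead compares with the Aronson--Weinberger traveling wave of positive speed for $g(\cdot,\bar{U})$ when $\bar{U}\le M_\infty$, which excludes blocking --- hence any barrier, monotone or not --- uniformly in $L$, and is the clean route to $\lim_{L\to+\infty}\bar{U}^*(L)\ge M_\infty$.
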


\emph{Sketch of the proof:} The proof in \cite{Jorge} is based on the geometric method presented in \cite{Lewis}. The existence of a barrier is linked to the intersection of two associated curves in the phase portrait of $\ds - D_u \widetilde{F}'' = g(\widetilde{F},0)$: the stable manifold of the stable equilibrium $F^+$ and a mapping of the homoclinic orbit that represents the stationary states that tend to zero at infinity. The intersection of these curves allows us to construct a piecewise stationary solution that acts as a barrier for traveling waves potentially arriving from beyond the release zone.

Studying the asymptotic behavior of $g(F,M_s)$ when $M_s\to+\infty$, a lower bound $L*$ is found for the intersection of the curves.  The monotony of the mapping with respect to $L$ and the monotony of $g(F,M_s)$ with respect to $M_s$ then imply that $\bar{U}^*$ is decreasing with respect to $L$. The speed of convergence when $L\to L^*$ is derived from a first order Taylor approximation. 

Finally, using the comparison principle with the parabolic equation $\partial_t \widetilde{F}- D_u \partial_{xx} \widetilde{F} = g(\widetilde{F},\bar{U})$, for which existence of a traveling wave solution with positive velocity when $\bar{U}\leq M_\infty$ is known \cite{AW75}, we deduce that there is no wave-blocking for $\bar{U}\leq M_\infty$, and therefore $M_\infty$ is a lower bound for $\bar{U}^*(L)$ and $\lim_{L\to +\infty} \bar{U}^*(L)$ exists.

%
%


\begin{thebibliography}{99}
\bs

\bibitem{WHO} \url{https://www.who.int/news-room/fact-sheets/detail/vector-borne-diseases}.

\bibitem{MBE} L. Almeida, M. Duprez, Y. Privat, N. Vauchelet, Control strategies on mosquitos population for the fight against arboviruses, \textit{Math. Biosci. Eng.}, 2019, \textbf{16} (6) : 6274--6297.

\bibitem{Jorge} L. Almeida, J. Estrada, N. Vauchelet, Wave blocking in a mosquito population model with introduced sterile males, In preparation.

\bibitem{Alphey} L. Alphey, M. Q. Benedict, R. Bellini, G. G. Clark, D. Dame, M. Service, S. Dobson, Sterile-insect methods for control of mosquito-borne diseases: an analysis. \textit{Vector Borne Zoonotic Dis}  (2010) \textbf{10}: 295--311.

\bibitem{Anguelov} R. Anguelov, Y. Dumont, and J. Lubuma, Mathematical modeling of sterile insect technology for control of anopheles mosquito, \textit{Comput. Math. Appl.}, \textbf{64} (2012), 374--389.

\bibitem{AW75} D. G. Aronson, H. F. Weinberger, 1975, \emph{Nonlinear Diffusion In Population Genetics,
Combustion, And Nerve Pulse Propagation}, in: Goldstein J.A. (eds) \emph{Partial Differential Equations and Related Topics. Lecture Notes in Mathematics}, vol 446. Springer, Berlin, Heidelberg

\bibitem{Rodriguez} H. Berestycki, N. Rodr\'iguez, L. Ryzhik, Traveling wave solutions in a reaction-diffusion model for criminal activity,  \textit{Multiscale Model. Simul.} \textbf{11} (2013), no. 4, 1097–-1126.

\bibitem{Bliman}  P.-A.  Bliman,  D.  Cardona-Salgado,  Y.  Dumont,  O.  Vasilieva, Implementation  of  Control Strategies for Sterile Insect Techniques,

\bibitem{Chapuisat} G. Chapuisat, R. Joly, Asymptotic profiles for a traveling front solution of a biological equation, \textit{Math. Mod. Methods Appl. Sci.} {\bf 21} (2011) 10, 2155--2177.

\bibitem{Dyck} V. A. Dyck, J. Hendrichs, A. S. Robinson, \textbf{Sterile Insect Technique Principles and Practice in Area-Wide Integrated Pest Management} (2005), Springer.

\bibitem{Eberle} S. Eberle, Front blocking versus propagation in the presence of a drift term in the direction of propagation, preprint.

\bibitem{Fu} G. Fu et al., 
 Female-specific flightless phenotype for mosquito control, \textit{Proc. Natl. Acad. Sci.}, 2010 \textbf{107} (10): 4550--4554.

\bibitem{Gould} F. Gould, Y. Huang, M. Legros,  A. L. Lloyd, A  Killer Rescue  system  for  self-limiting gene drive of anti-pathogen constructs, \textit{Proc. R. Soc. B} (2008) \textbf{275}, 2823--2829.

\bibitem{Heinrich} J. Heinrich,  M. Scott,  A repressible female-specific lethal genetic system for making trans-genic insect strains suitable for a sterile-release program, \textit{Proc. Natl.  Acad.  Sci.  USA}, 2000 \textbf{97} (15): 8229--8232.

\bibitem{Hoffmann}  A.  A.  Hoffmann et al.,
  Successful establishment of Wolbachia in Aedes populations to suppress dengue transmission, \textit{Nature} Aug 24; \textbf{476} (7361):454--7, 2011.

\bibitem{Lewis} T.J. Lewis, J.P. Keener, Wave-block in excitable media due to regions of depressed excitability, \textit{SIAM J. Appl. Math.} \textbf{61} (2000), 293-316.

\bibitem{LewisVDDriessche} M. A. Lewis, P. van den Driessche, Waves of extinction from sterile insect release, \textit{Math Biosci.} 1993 Aug, \textbf{116} (2):221--47.

\bibitem{Li} J. Li, Z. Yuan,  Modelling releases of sterile mosquitoes with different strategies, \textit{Journal of Biological Dynamics}, \textbf{9} (2015), 1--14.

\bibitem{Marshall} J. M. Marshall, G. W. Pittman, A. B. Buchman, B. A. Hay, Semele:  a killer-male, rescue-female system for suppression and replacement of insect disease vector populations, \textit{Genetics}, 2011 Feb; \textbf{187}(2):535--51.

\bibitem{block} G. Nadin, M. Strugarek, N. Vauchelet, Hindrances to bistable front propagation: application to Wolbachia invasion, \textit{J. Math. Biol.} \textbf{76} (2018), no 6, 1489--1533.

\bibitem{Oleron} T. P. Ol\'eron Evans, S. R. Bishop, A spatial model with pulsed releases to compare strategies for the sterile insect technique applied to the mosquito Aedes aegypti, \textit{Mathematical Biosciences} \textbf{254}, (2014), 6--27.

\bibitem{Pauwelussen} J. Pauwelussen, One way traffic of pulses in a neuron, \textit{J. Math. Biol.} \textbf{15}, 151--171, 1982.

\bibitem{book} B. Perthame, \textbf{Parabolic equations in biology}, Springer International Publishing, Lecture Notes on Mathematical Modelling in the Life Sciences, 2015.

\bibitem{Robert2012}  M.  A.  Robert,  K.  Okamoto,  F.  Gould,  A.  L.  Lloyd,   A  reduce  and  replace  strategy  forsuppressing  vector-borne diseases:  insights from  a  deterministic model, \textbf{PLoS ONE} (2012) 8:e73233.

\bibitem{Lee2} S. Seirin Lee, R. E. Baker, E. A. Gaffney, S. M. White, Modelling Aedes aegypti mosquito control via transgenic and sterile insect techniques: Endemics and emerging outbreaks, Journal of Theoretical Biology \textbf{331} (2013) 78--90.

\bibitem{Lee} S. Seirin Lee, R. E. Baker, E. A. Gaffney, S. M. White, Optimal barrier zones for stopping the invasion of Aedes aegypti mosquitoes via transgenic or sterile insect techniques, \textit{Theor. Ecol.} (2013) \textbf{6}: 427--442.

\bibitem{Bossin} B. Stoll, H. Bossin, H. Petit, J. Marie, M. Cheong Sang, Suppression of an isolated popu-lation of the mosquito vector Aedes polynesiensis on the atoll of Tetiaroa, French Polynesia, by  sustained  release  of  Wolbachia-incompatible  male  mosquitoes,  \textit{Conference:  ICE  -  XXV International Congress of Entomology}, At Orlando, Florida, USA.

\bibitem{SBD} M. Strugarek, H. Bossin, Y. Dumont, On the use of the sterile insect technique or the incompatible insect technique to reduce or eliminate mosquito populations, \textit{Applied Mathematical Modelling}, 2019 \textbf{68} : 443--470.

\bibitem{Thomas} D. D. Thomas, C. A. Donnelly, R. J. Wood, L. S. Alphey,  Insect population control using adominant, repressible, lethal genetic system. \textit{Science} (2000) \textbf{287} (5462):  2474--2476.

\bibitem{Ward} C. M. Ward, J. T. Su, Y. Huang, A. L. Lloyd, F. Gould, B. A. Hay, Medea selfish genetic elements as tools for altering traits of wild populations: a theoretical analysis. \textit{Evolution}, 2011 Apr, \textbf{65} (4):1149--62.

\bibitem{Zheng} X. Zheng et al., Incompatible and sterile insect techniques combined eliminate mosquitoes, \textit{Nature}, 2019 Aug; \textbf{572} (7767):56--61.




\end{thebibliography}


\end{document}